\newcommand{\comment}[1]{}
\numberwithin{equation}{section}
\newtheorem{theorem}{Theorem}[section]
\newtheorem{algorithm}{Algorithm}[section]
\numberwithin{equation}{section}
\DeclareMathOperator{\Var}{Var}
\newtheorem{remark}{Remark}[section]
\theoremstyle{definition}
\newtheorem{definition}{Definition}[section]
\newtheorem{assumption}{Assumption}[section]
\newtheorem{notation}{Notation}[section]
\DeclareMathOperator{\E}{E} \DeclareMathOperator{\var}{Var}
\DeclareMathOperator{\Tr}{Tr}
\newcommand{\beq}{\begin{eqnarray}}
\newcommand{\eeq}{\end{eqnarray}}
\newcommand{\ben}{\begin{eqnarray*}}
\newcommand{\een}{\end{eqnarray*}}
\title{Independence of linear spectral statistics and the point process at the edge of Wigner matrices}
 \author{
\sc Debapratim Banerjee
 \\ \small Indian Statistical Institute, Kolkata\\ debopratimbanerjee@gmail.com\\}
\begin{document}
 \maketitle
 \begin{abstract}
 In the current paper we consider a Wigner matrix and consider an analytic function of polynomial growth on a set containing the support of the semicircular law in its interior. We prove that the linear spectral statistics corresponding to the function and the point process at the edge of the Wigner matrix are asymptotically independent when the entries of the Wigner matrix are sub-Gaussian. The main ingredient of the proof is based on a recent paper by \citet{Ban22}. The result of this paper can be viewed as a first step to find the joint distribution of eigenvalues in the bulk and the edge.
 \end{abstract}
 \section{Introduction}
 Since the groundbreaking discovery of \citet{wig1}, Wigner matrices have been a topic of key interest in the mathematics and physics communities. Later on these matrices proved to be important for many models in engineering, high dimensional statistics and many other branches.

 In particular, a Wigner matrix is a $n \times n$ symmetric (hermitian) matrix with real (complex) entries where the entries of the upper diagonal part are i.i.d. with mean $0$ and variance $\frac{1}{n}$. One is interested in the eigenvalue distribution of the matrix when the dimension grows to infinity. 
 The study of eigenvalues of Wigner matrices started with characterizing the limiting distribution of the histogram of the eigenvalues. This is done in the seminal papers of \citet{wig1} and \citet{wig2}. It is known that this limiting distribution exists and coined as the famous semicircular distribution. In particular, it is given by the following density.
\begin{equation}\label{eq:semicircular}
 f(x)=\left\{
 \begin{array}{ll}
 \frac{1}{2\pi} \sqrt{4-x^2} & \text{when~ $|x|\le 2$}\\
 0 & \text{otherwise} 
 \end{array}
 \right.
\end{equation}  
However after specifying the spectral distribution, there has been a remarkable advancement in this topic. Several interesting questions related to the eigenvalues have been raised and have been subsequently solved.

The main objective of this paper is to relate two such branches. The first one is the study of $\Tr[g(W)]$ for an analytic function $g$ containing the support of the semicircular law and the second one is the study of eigenvalues near the support of the semicircular law. Here $W$ is the Wigner matrix. The second one is about the study of the eigenvalues near the edge of the spectrum.

The study of linear spectral statistics was initiated in the paper of \citet{bai2008clt}. Here one considers a generalized Wishart matrix and the linear spectral statistics of a general analytic function. There have been several papers in this topic after that. For Wigner matrices the CLT of linear spectral statistics was studied in \citet{bai2009clt}. These results are proven using a rigorous analysis of the Stieltjes transform of the empirical distribution function of the eigenvalues of the Wigner/Wishart matrix. On the other hand a combinatorial approach for solving the similar problem was initiated in the paper by \citet{AZ05}.  

On the other hand, study of the eigenvalues near the edge of the Wigner matrix is also well known. Here we look at the point processes of the eigenvalues near $\pm 2$ which is the support of the semicircular distribution. Using the explicit distribution of the eigenvalues the fluctuation of the largest eigenvalue of the Wigner matrix was first proved in the seminal works of Tracy and Widom \cite{tracy1994level}, \cite{tracy1996orthogonal} in the Gaussian case. In particular it is proved that 
\begin{equation}
\mathbb{P}\left[n^{\frac{2}{3}} \left( \lambda_{1,n}- 2 \right)\le s\right] \to F_{\beta}(s)
\end{equation}
where the Tracy-Widom distribution functions $F_{\beta}$ can be described by Painleve equations, and $\beta= 1,2,4$ corresponds to Orthogonal/Unitary/Symplectic ensemble, respectively. Here $\lambda_{1,n}\ge \ldots \ge \lambda_{n,n}$ are the eigenvalues of the Wigner matrix $W$. The joint distribution
of $k$ largest eigenvalues can be expressed in terms of the Airy kernel, which was shown by \citet{forrester1993spectrum}. In general the joint distribution of $\left( \lambda_{1,n},\ldots , \lambda_{k,n} \right)$ will also converge after proper rescaling and centering. In particular 
\begin{equation}
\begin{split}
\mathbb{P}\left[ n^{\frac{2}{3}}\left( \lambda_{1,n}-2 \right)\le s_{1},\ldots, n^{\frac{2}{
3}}\left( \lambda_{k,n} -2\right)\le s_{k} \right] \to F_{\beta,k}(s_{1},\ldots , s_{k}). 
\end{split}
\end{equation}
The $k$ dimensional distribution $F_{\beta,k}$ will also be coined as Tracy Widom distribution.
  
After the Gaussian case being solved, a significant amount of work has been done for general non-Gaussian entries. This problem is known as edge universality. The edge universality of the Wigner matrix was first proved in the paper \citet{sosh} through combinatorial methods. He assumed that the distributions of the entries of the matrix are sub-Gaussian and symmetric. The universality for the non-symmetric entries was first proved by \citet{tao2010random}. Here one assumes that entries have vanishing third moment and the tail decays exponentially. Then there was a different approach by Erd\H{o}s, Yau and others by analyzing the resolvent matrix. One might look at \citet{erdHos2017dynamical}. Very recently \citet{Ban22} proved edge universality through a new combinatorial approach where he removed assumption of symmetry of the entries.

However to the best of our limited knowledge nothing has been proved for the joint distribution of a generic linear spectral statistics and the eigenvalues near the edge. \emph{In this paper following the combinatorial approach of \citet{Ban22} and \citet{AZ05} we prove that they are asymptotically independent.} This result can be seen as a first step to prove the joint distribution of eigenvalues near a typical point in the interior of the support of the semicircular law and the eigenvalues near the edge. This is due to the following reason: we choose a point $x \in (-2,2)$ and look at an interval of length $O\left(\frac{1}{n}\right)$ around $x$ and look at the eigenvalues falling in the interval. This type of statistics can be approximated by a function $g_{x,n}(\cdot)$ where $g_{x,n}(\cdot)$ takes $O(1)$ values in an interval of length $O\left(\frac{1}{n}\right)$ around $x$ and is very close to $0$ outside the interval. However as of now, there are some technical difficulties in this scenario. This is due to the fact that the main ingredient of the proof is by approximation of an analytic function by a first few terms of it's power series expansion. However for analytic functions just discussed, this kind of approximation is not well behaved. This is an ongoing work. 

The linear spectral statistics and the largest eigenvalue are also interesting from high dimensional statistics and spin glass point of view. In spin glasses, it is known that for spherical Sherrington Kirkpatrick model the free energy at high temperature can be approximated by a linear spectral statistics of the interaction matrix and in low temperature it can be approximated by the largest eigenvalue of the interaction matrix. One might look at \citet{Baikleestat} for a reference. A straight forward corollary of our result is the independence of free energies in low and high temperature of spherical SK model made out of same interaction matrix. The joint distribution of largest eigenvalue and a generic linear spectral statistics is also important in high dimensional statistics. In particular it is proved in \citet{zhang2020asymptotic} for large spikes, when we consider the largest eigenvalue of a spiked Wishart matrix and a generic spectral statistics they are independent. One might also look at \citet{li2020asymptotic}. However these results are for the spiked matrices where the spikes are large. In particular, the largest eigenvalue has an asymptotic normal distribution as opposed to Tracy-Widom distribution.  
 \section{Preliminaries}
 \subsection{Model description and related terminologies}
 Firstly we start with the definition of Wigner matrices. 
 \begin{definition}\label{def:wig}
 We call a matrix $W= \left( x_{i,j}/ \sqrt{n} \right)_{1\le i,j \le n}$ to be a Wigner matrix if $x_{i,j}=\bar{x}_{j,i}$, $\left(x_{i,j}\right)_{1\le i<j\le n}$ are i.i.d., $\E[x_{i,j}]=0$ and $\E[|x_{i,j}|^2]=1$. 
 \end{definition}
 In this paper we deal with the real symmetric matrices and for the ease of calculation we scale the whole matrix by a factor $2$. With slight abuse of notation we shall also call this matrix a Wigner matrix and denote it by $W$. Following are the assumptions of the matrices we consider.
 \begin{assumption}\label{ass:wig}
 We consider a matrix $W$ given by $W= (x_{i,j}/\sqrt{n})_{1\le i,j \le n}$ such that the following conditions are satisfied:
 \begin{enumerate}[(i)]
 \item $x_{i,j}=x_{j,i}$ for $i\le j$.
 \item $\var(x_{i,j})=\frac{1}{4}$
 \item $\left(x_{i,j}\right)_{1\le i<j \le n}$ are i.i.d.
 \item $\E[x_{i,j}^{2k}]\le \left( \mathrm{const} .k \right)^{k}$ $\forall ~ k \in \mathbb{N}$ 
 %\item $k$ is fixed, $u_{i}$'s are mutually orthonormal vectors and $\frac{1}{2}> h_{1}>h_{2}>\ldots > h_{k}$.
 \end{enumerate}
 \end{assumption}
 Given a Wigner matrix $W$ of dimension $n \times n$ we denote its eigenvalues by $\lambda_{1,n}\ge \ldots \ge \lambda_{n,n}$. It is well known that for a Wigner matrix in Definition \ref{def:wig}, the measure $\frac{1}{n}\sum_{i=1}^{n} \delta_{\lambda_{i,n}}$ converge weakly to the semicircular law in the almost sure sense. The law is given by the following density:
\begin{equation}\label{eq:semicircular}
 f(x)=\left\{
 \begin{array}{ll}
 \frac{1}{2\pi} \sqrt{4-x^2} & \text{when~ $|x|\le 2$}\\
 0 & \text{otherwise.} 
 \end{array}
 \right.
\end{equation} 
  When we scale the entries by a factor $2$, the distribution is supported in $[-1,1]$ and its density is given by 
 \begin{equation}\label{eq:semicircularII}
 f(x)=\left\{
 \begin{array}{ll}
 \frac{2}{\pi} \sqrt{1-x^2} & \text{when~ $|x|\le 1$}\\
 0 & \text{otherwise.} 
 \end{array}
 \right.
\end{equation}
One of the main quantities of consideration in this paper is the linear spectral statistics. This is defined as follows:
\begin{definition}
Suppose $g$ is an analytic function on a set containing $[-1,1]$ in its interior. We define the linear spectral statistics corresponding to the function $g$ as follows:
\begin{equation}
\Tr[g(W)]= \sum_{i=1}^{n} g(\lambda_{i,n})
\end{equation}
\end{definition}
Our main approach is to work with a polynomial function and use the power series representation of the polynomial function to work with general analytic function. So most of the time $g$ will be a polynomial of finite degree.
\subsection{Preliminaries for the combinatorial approach}
The main approach of this paper is combinatorial. For the linear spectral statistics we follow the approach of \citet{AZ05} and for the point process near the edge we follow the approach of \citet{Ban22}. In this subsection we develop the required terminologies.
To begin with we start with a matrix $W$ of dimension $n \times n$. Its $k$ th moment is given by 
\begin{equation}\label{eq:traceexpression}
\Tr[W^{k}]= \sum_{i_{0},i_{1},\ldots ,i_{k-1}, i_{0}} W_{i_{0},i_{1}}\ldots W_{i_{k-1},i_{0}}.
\end{equation} 
The word sentence method systematically analyzes the tuples $(i_{0}, \ldots, i_{k-1},i_{0})$ for some suitable $k$. To do this we need some notations and definitions. 
In this part we give a very brief introduction to words, sentences and their equivalence classes essential for the combinatorial analysis of random matrices. 
The definitions are taken from \citet{AGZ} and \citet{AZ05}.
For more general information, see \cite[Chapter 1]{AGZ} and \cite{AZ05}. 
% The definitions in this section have been taken from  Anderson et al. \cite{AGZ} and Anderson and Zeitouni \cite{AZ05}.
\begin{definition}[$\mathcal{S}$ words]
Given a set $\mathcal{S}$, an $\mathcal{S}$ letter $s$ is simply an element of
$\mathcal{S}$. An $\mathcal{S}$ word $w$ is a finite sequence of letters $s_1
\ldots s_k$, at least one letter long.
An $\mathcal{S}$ word $w$ is \emph{closed} if its first and last letters are the same. In this paper $\mathcal{S}=\{1,\ldots,n\}$ where $n$ is the number of nodes in the graph.
\end{definition}
% \begin{definition}
Two $\mathcal{S}$ words
$w_1,w_2$ are called \emph{equivalent}, denoted $w_1\sim w_2$, if there is a bijection on $\mathcal{S}$ that
maps one into the other.
For any word $w = s_1 \ldots s_k$, we use $l(w) = k$ to denote the \emph{length} of $w$, define
the \emph{weight} $wt(w)$ as the number of distinct elements of the set ${s_1,\ldots , s_k
}$ and the
\emph{support} of $w$, denoted by $\mathrm{supp}(w)$, as the set of letters appearing in $w$. 
With any word $w$ we may associate an undirected graph, with $wt(w)$ vertices and at most $l(w)-1$ edges,
as follows.
\begin{definition}[Graph associated with a word] 
	\label{def:graphword}
Given a word $w = s_1 \ldots s_k$,
we let $G_w = (V_w,E_w)$ be the graph with set of vertices $V_w = \mathrm{supp}(w)$ and (undirected)
edges $E_w = \{\{s_i, s_{i+1}
\}, i = 1,\ldots ,k - 1
\}.$
\end{definition}
The graph $G_w$ is connected since the word $w$ defines a path connecting all the
vertices of $G_w$, which further starts and terminates at the same vertex if the word
is closed. 
% For $e \in E_w$, we use $N^w_
% e$ to denote the number of times this path traverses
% the edge $e$ (in any direction).
We note that equivalent words generate the same
graphs $G_w$ (up to graph isomorphism) and the same passage-counts of the edges. 
% $N^w_
% e$.
Given an equivalence class $\mathbf{w}$, we shall sometimes denote $\#E_{\mathbf{w}}$ and $\#V_{\mathbf{w}}$ to be the common number of edges and vertices for graphs associated with all the words in this equivalence class $\mathbf{w}$. 

\begin{definition}[Weak Wigner words]
	\label{def:weakwigner}
Any word $w$ will be called a \emph{weak Wigner word} if the following conditions are satisfied:
\begin{enumerate}
\item $w$ is closed.
\item $w$ visits every edge in $G_{w}$ at least twice. 
\end{enumerate}
\end{definition}
Suppose now that $w$ is a weak Wigner word. If $wt(w) = (
l(w) + 1)/2$, then we
drop the modifier ``weak" and call $w$ a \emph{Wigner word}. (Every single letter word is
automatically a Wigner word.) 
Except for single letter words, each edge in a Wigner word is traversed exactly twice.
If $wt(w) = (l(w)-1)/2$, then we call $w$ a \emph{critical
weak Wigner word}.

It is a well known result in random matrix theory that there is a bijection from the set of the Wigner words of length $2k+1$ to the set of Dyck paths of length $2k$.
We now move to definitions related to sentences.

\begin{definition}[Sentences and corresponding graphs]
	\label{def:sentence}
A sentence $a=[w_i]_{i=1}^{m}=[[\alpha_{i,j}]_{j=1}^{l(w_i)}]_{i=1}^{m}$ is an ordered collection of $m$ words of length $(l(w_1),\ldots,l(w_m))$ respectively. We define the graph $G_a=(V_a,E_a)$ to be the graph with 
\[
V_a= \mathrm{supp}(a),\quad 
E_a= \left\{ \{ \alpha_{i,j},\alpha_{i,j+1}\}| i=1,\ldots,m ; j=1,\ldots, l(w_i)-1 \}  \right\}. 
\] 
\end{definition} 
\begin{notation}
Given a closed word $w=(i_{0},i_{1},\ldots,i_{k-1},i_{k}=i_{0})$ we need to consider the random variable $\prod_{j=0}^{k-1} x_{i_{j},i_{j+1}}$. We call this random variable $X_{w}$.
\end{notation} 
In addition to this we also need the map from word to Dyck paths and the related results and terminologies for the analysis of the eigenvalues near the edge. For these the reader may have a look at subsection 7.2 in \citet{Ban22}.
\section{Main result}
We now state the main result of this paper.
\begin{theorem}\label{thm:main}
Suppose we have a Wigner matrix $W$ satisfying the assumptions in Assumption \ref{ass:wig}. Then the following are true:
\begin{enumerate}
\item Let $g$ be any polynomial of fixed degree. Then the random variable\\
 $\left(\Tr[g(W)]-\E\left[\Tr[g(w)]\right]\right)$ and the point process at the edge of the spectrum (i.e. at $x= \pm 1$) are asymptotically independent. 
\item More generally $g$ can be taken to be a fixed analytic function of polynomial growth on a set containing $[-1,1]$ in its interior.
\item As a corollary, for any fixed $k$, the random vector $\left(n^{\frac{2}{3}}2\left(\lambda_{1,n}-1\right), \ldots, n^{\frac{2}{3}}2\left(\lambda_{k,n} -1 \right) \right)$\\
 and $\left(\Tr[g(W)]-\E\left[\Tr[g(w)]\right]\right)$ are independent.
\end{enumerate}
\end{theorem}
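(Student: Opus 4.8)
The plan is to reduce the statement to a single combinatorial estimate in the word--sentence formalism and to show that, in the joint moments of the centred linear spectral statistics and of the traces of high powers of $W$ (which govern the edge), the surviving contributions come only from configurations in which the ``low degree'' and ``high degree'' parts are decoupled. I begin with two reductions. Part (3) follows from part (1): for fixed $k$ the vector $\big(n^{2/3}2(\lambda_{1,n}-1),\dots,n^{2/3}2(\lambda_{k,n}-1)\big)$ is an almost surely continuous functional of the edge point process, so its asymptotic independence of $\Tr[g(W)]-\E[\Tr[g(W)]]$ is inherited from that of the whole process. For part (2) I would write $g(x)=\sum_{j\ge0}c_jx^j$ with the series convergent on a disc of radius $\rho>1$, so that $\Tr[g(W)]-\E[\Tr[g(W)]]=\sum_j c_j\big(\Tr[W^j]-\E[\Tr[W^j]]\big)$, and truncate at degree $J$; the tail is negligible because $|c_j|$ decays geometrically, $\var(\Tr[W^j])$ is bounded uniformly in $n$ for each fixed $j$, and $\pr(\|W\|>\rho')$ is super-polynomially small for $1<\rho'<\rho$ under the sub-Gaussian hypothesis (this controls the polynomially growing $g$ off the event $\{\|W\|\le\rho'\}$), after which one lets $J\to\infty$ following $n\to\infty$. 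Hence the task reduces to part (1) with $g$ a polynomial, i.e.\ to $\sum_a c_a\big(\Tr[W^{r_a}]-\E[\Tr[W^{r_a}]]\big)$ for finitely many fixed exponents $r_a$.

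Next, by the combinatorial treatment of the edge in \citet{sosh} and \citet{Ban22}, the edge point process is determined in law by the limiting joint distributions of the traces $\Tr[W^{k_n^{(b)}}]$ with $k_n^{(b)}=\lfloor\theta_b n^{2/3}\rfloor$ (even and odd powers separating the edges at $\pm1$), and these traces have convergent moments of all orders that satisfy Carleman's condition; the linear spectral statistics are asymptotically Gaussian, hence also moment determined, and so is the product of the two limit laws. Consequently it suffices to prove that all mixed moments factorise in the limit: writing $Y$ for any fixed product of centred short traces $\Tr[W^{r_a}]-\E[\Tr[W^{r_a}]]$ and $Z$ for any fixed product of long traces $\Tr[W^{k_n^{(b)}}]$, one must show $\E[YZ]-\E[Y]\,\E[Z]\to0$; equivalently, every joint cumulant mixing at least one short exponent with at least one long exponent tends to $0$.

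To prove this I would expand each trace over closed walks and regroup $\E[YZ]$ as a sum over sentences built from the short words (length $r_a$) and long words (length $k_n^{(b)}$), using the word--sentence bookkeeping of \citet{AZ05} for the short words and of \citet{Ban22} for the long ones, and split the sum according to whether the supports of the short words meet those of the long words. When they are disjoint the sum factorises: up to vertex-exclusion corrections of relative size $O(n^{-1/3})$ (the short words, of bounded total length, must avoid the $O(n^{2/3})$ vertices of the long words) and up to $o(1)$ errors from replacing $n$ by $n-O(n^{2/3})$ in the short-trace moments, the disjoint part equals $\E[Y]\,\E[Z]+o(1)$, with the short-only sentences reproducing the linear spectral statistics computation of \citet{AZ05} and the long-only sentences the edge computation of \citet{Ban22}. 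It then remains to show that the sentences mixing a short word and a long word contribute $o(1)$. For these one uses that, the entries being centred, every edge of the multigraph of a contributing sentence is traversed at least twice, and that passing to the cumulant (equivalently subtracting the products of expectations) annihilates any sentence in which the short and long words are edge-disjoint, hence probabilistically independent. On a surviving sentence a short closed word is glued to a long word along at least one common edge; the long word is then forced into a near-Dyck shape, contributing a factor $\prod_b C_{k_n^{(b)}}4^{-k_n^{(b)}}\asymp\prod_b(k_n^{(b)})^{-3/2}$, while the glued short closed walk, which identifies at least two of its vertices with vertices already used by the long word yet still contributes a full closed walk, creates a net deficit of distinct vertices relative to the normalisation $n^{-(\mathrm{length})/2}$. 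Balancing this deficit against the polynomial-in-$k_n^{(b)}$ number of gluing sites and the $(k_n^{(b)})^{-3/2}$ decay bounds each such sentence by $o(1)$, and only boundedly many sentence shapes occur.

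The hard part will be exactly this last estimate. A crude vertex count does not suffice: gluing a short closed word onto a long Dyck-type word a priori yields a contribution of order $n^{2/3}$ or larger, and one must extract the cancellation that comes simultaneously from the $(k_n)^{-3/2}$-type decay of the high-power trace weights and from the vertex deficit forced by the short walk being closed, while also controlling edges of multiplicity greater than two (which, as in \citet{sosh} and \citet{Ban22}, cost further vertices) and summing over the a priori large number of ways a long Dyck word can be perturbed near a gluing site. Making this uniform over all the $\theta_b$ and over all moment orders --- which is what upgrades ``vanishing covariances'' to genuine asymptotic independence --- is where the combinatorial machinery of \citet{Ban22} is essential, and it is the core of the argument.
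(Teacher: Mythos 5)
Your high-level skeleton matches the paper's: reduce to showing that all mixed moments factorize (Theorem \ref{thm:joincltpp}), split the word-sentence sum according to whether the low-degree and high-degree words share structure, factorize the disjoint piece by independence, and show the overlapping piece vanishes. Two of your moves also align with Steps 1 and 3 of the paper's proof: using Algorithm-style gluing of a short word onto a long word, and observing that edge-disjointness of the associated graphs implies probabilistic independence of $X_{w}$ and $X_{w'}$ so that the centred product vanishes. Parts (2) and (3) are likewise reduced to part (1) the way you suggest.

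But there are two genuine gaps. \emph{First}, the shared-edge estimate is where the entire proof lives, and you explicitly leave it unresolved. Your heuristic (``vertex deficit'' plus Catalan decay $(k_n)^{-3/2}$ versus a polynomial number of gluing sites) does not, as stated, close: you correctly note that a crude count gives $n^{2/3}$ and that something more is needed, but you don't identify what. The paper's key insight is different and sharper: after gluing $w_{1}$ (length $m+1$, fixed) to $w_{2}$ (length $\sim tn^{2/3}$) into a single word $\mathfrak{w}$ and passing to the \citet{Ban22} skeleton-word parametrization, the sum over \emph{unconstrained} skeleton parameters is already only $O(1)$ (this is exactly the bound that proves Theorem \ref{thm:traceconvergence}); the requirement that the sub-word $w_{1}$ be short then imposes a hard constraint such as $p_{l}\le m$ or $|p_{1}+\cdots+p_{l}-r_{l}|\le m$ on the skeleton parameters, which restricts the sum to a vanishing fraction of that $O(1)$ mass, giving $o(1)$ as in \eqref{eq:tobound}. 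Nothing in your sketch produces such a constraint; you instead aim at a direct vertex/entropy balance, which you yourself flag as insufficient. \emph{Second}, your reduction of part (2) via operator-norm tails $\pr(\|W\|>\rho')$ is not quite right: $|\Tr[g(W)]-\Tr[g^{(J)}(W)]|$ is bounded only by $n\sup_{|x|\le\rho'}|g-g^{(J)}|$, and the factor $n$ is fatal for a variance estimate. The paper instead invokes the Poincar\'e-type bound of Theorem \ref{thm:cltapprox}, $\Var[\Tr g(W)]\le\bar{c}\sup|g'|^{2}$ uniformly in $n$, which is the correct tool. (Your additional preliminary split by \emph{vertex}-disjointness of supports is also an unnecessary complication; the paper works directly with edge-disjointness, which is the right notion for independence and avoids the ``vertex-exclusion'' error terms you introduce.)
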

\section{Strategies for the proof}
As mentioned at the beginning, the main technique of this current paper is combinatorial in nature. In particular for an analytic function $g$, it is known that\\
 $\left(\Tr\left[ g(W) \right]-\E\left[ \Tr\left[ g(W) \right] \right]\right)$ converges to a centered Gaussian with explicit variance. The combinatorial approach for proving this kind of result was initiated in the paper by \citet{AZ05}. In order the prove this the authors first consider the joint distribution of 
$$\left( \Tr\left[ W^{m_{1}} \right] - \E\left[\Tr\left[ W^{m_{1}} \right]\right],\ldots, \Tr\left[ W^{m_{k}} \right] - \E\left[\Tr\left[ W^{m_{k}} \right]\right] \right)  $$ 
 for every fixed $k$ and integers $m_{1},\ldots, m_{k} \in \mathbb{N}$. It is proved using the method of moments that this joint distribution is $k$ dimensional Gaussian with explicit covariance matrix. The results are summarized in the following theorem. 
 \begin{theorem}\label{thm:polyclt}(Chapter $1$ \citet{AGZ})
 Suppose $W$ is a Wigner matrix satisfying Assumption \ref{ass:wig} and we want to determine the joint distribution of
 $$\left( \Tr\left[ W^{m_{1}} \right] - \E\left[\Tr\left[ W^{m_{1}} \right]\right],\ldots, \Tr\left[ W^{m_{k}} \right] - \E\left[\Tr\left[ W^{m_{k}} \right]\right] \right).$$
   Then there exists $\Sigma$ such that 
 \begin{enumerate}
 \item for any $l$ and $m_{1}', \ldots , m_{l}' \in \{ m_{1},\ldots, m_{k} \}$ the joint moment 
 \begin{equation}
 \begin{split}
 &\E\left[\prod_{i=1}^{l} \left(\Tr\left[ W^{m_{i}'} \right] - \E\left[\Tr\left[ W^{m_{i}'} \right]\right]\right) \right]\\
 & \to \left\{
 \begin{array}{ll}
 0 & \text{if $l$ is odd}\\
 \sum_{\eta\in \mathcal{P}_{l}}\prod_{j=1}^{\frac{l}{2}} \Sigma\left( m_{\eta(j,1)}',m_{\eta(j,2)}' \right) & \text{otherwise}.
 \end{array} 
 \right.
 \end{split}
 \end{equation}
 Here for an even number $l$, $\mathcal{P}_{l}$ denotes all partitions of the set $\{ 1,2,\ldots, l \}$ such that each block has exactly two elements. Further for any partition $\eta \in \mathcal{P}_{l}$, $\eta(j,1)$ and $\eta(j,2)$ denote the first and second element of the $j$ th block in $\eta$. 
 \item As a corollary, we have the vector $$\left( \Tr\left[ W^{m_{1}} \right] - \E\left[\Tr\left[ W^{m_{1}} \right]\right],\ldots, \Tr\left[ W^{m_{k}} \right] - \E\left[\Tr\left[ W^{m_{k}} \right]\right] \right)  $$ converges to a $k$ dimensional centered Gaussian with covariance matrix $\left(\Sigma(m_{i},m_{j})\right)_{1\le i,j \le k}$.
\end{enumerate}  
 \end{theorem}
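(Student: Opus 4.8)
The plan is the method of moments carried out through the word--sentence expansion set up above. Fix $l\ge 1$ and $m_1',\dots,m_l'\in\{m_1,\dots,m_k\}$. Expanding each trace by \eqref{eq:traceexpression} and extracting the normalisation from $W=(x_{i,j}/\sqrt n)$,
\[
\prod_{i=1}^{l}\bigl(\Tr[W^{m_i'}]-\E[\Tr[W^{m_i'}]]\bigr)=n^{-\frac12\sum_{i=1}^{l}m_i'}\sum_{a=[w_1,\dots,w_l]}\ \prod_{i=1}^{l}\bigl(X_{w_i}-\E[X_{w_i}]\bigr),
\]
where $a$ runs over sentences made of $l$ closed words with $l(w_i)=m_i'+1$ and letters in $\{1,\dots,n\}$. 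Taking expectations and grouping the sentences into equivalence classes $\mathbf a$, the number of sentences in class $\mathbf a$ is $n(n-1)\cdots(n-wt(\mathbf a)+1)=(1+o(1))n^{wt(\mathbf a)}$ and $\E[\prod_i(X_{w_i}-\E[X_{w_i}])]$ depends only on the class, so the contribution of $\mathbf a$ is $(1+o(1))\,n^{\,wt(\mathbf a)-\frac12\sum_i m_i'}$ times a bounded combinatorial coefficient --- bounded because the degrees $m_i'$ are fixed, so every entry appears a bounded number of times and each moment $\E[x_{i,j}^{\,r}]$ with $r\le\max_i m_i'$ that occurs is finite by Assumption \ref{ass:wig}(iv). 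It therefore remains to decide which classes survive the limit.

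Next I would prove $wt(\mathbf a)\le\tfrac12\sum_i m_i'$, so that no class contributes more than $O(1)$. Since $\E[x_{i,j}]=0$, the coefficient of $\mathbf a$ vanishes unless every edge of $G_{\mathbf a}$ is traversed at least twice, giving $\#E_{\mathbf a}\le\tfrac12\sum_i m_i'$. Now group the words into \emph{clusters}, the classes of the equivalence generated by ``shares an edge''; by independence of the entries, $\E[\prod_i(X_{w_i}-\E[X_{w_i}])]$ factors as a product over clusters. A one--word cluster contributes the factor $\E[X_{w_i}-\E[X_{w_i}]]=0$, so in a surviving class every cluster has size at least two. For a cluster $C$ with graph $G_C$: if $G_C$ contains a cycle then $\#V_C\le\#E_C$; if $G_C$ is a forest, then each closed word in $C$ traverses every edge an even number of times, and since the words of $C$ cannot have pairwise disjoint edge sets this forces $\#E_C\le\tfrac12\sum_{w\in C}m_w-1$, hence $\#V_C\le\#E_C+1\le\tfrac12\sum_{w\in C}m_w$. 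Summing over clusters, $wt(\mathbf a)=\sum_C\#V_C\le\tfrac12\sum_i m_i'$, with equality only if each cluster is extremal: every edge traversed exactly twice, and each cluster either unicyclic or a tree in which two closed words overlap in a single edge.

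It remains to classify the extremal classes and read off the limit. The key point is that an extremal cluster has \emph{exactly two} words: for a tree cluster this follows from the multiplicity bookkeeping above (three or more ``efficient'' closed words on a tree leave some edge in only one of them, disconnecting the cluster); for a unicyclic cluster one uses that the cycle space is one--dimensional over $\mathbb Z/2$, so each word's edge--parity is either $0$ or the whole cycle, and since a sum of an even number of odd cycle--multiplicities already exceeds $2$ unless that number equals $2$, exactly two words carry the cycle while a short pendant--subtree analysis shows the remaining (even) words would again detach from the pair. Moreover distinct clusters must span disjoint vertex sets, else $wt(\mathbf a)$ drops. Thus a surviving class is a pairing $\eta\in\mathcal P_l$ of $\{1,\dots,l\}$ decorated block by block with a two--word ``matched pair'', and the two flavours --- overlapping Wigner words (this one bringing in $\E[x_{i,j}^{4}]$) and unicyclic pairs --- are exactly the configurations whose $n$--independent counts, weighted by the surviving second moments, sum to $\Sigma(m_{\eta(j,1)}',m_{\eta(j,2)}')$, which I take as the definition of $\Sigma$. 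Hence the limit of the joint moment is $\sum_{\eta\in\mathcal P_l}\prod_{j=1}^{l/2}\Sigma(m_{\eta(j,1)}',m_{\eta(j,2)}')$, and for odd $l$ no pairing exists so the limit is $0$; this is part (1), and part (2) follows at once since these are the moments of a centered Gaussian vector with covariance $(\Sigma(m_i,m_j))_{i,j}$, whose law is determined by its moments. I expect the principal obstacle to be precisely this extremal classification --- ruling out clusters of three or more words and handling the pendant--subtree bookkeeping cleanly --- together with checking that the sub-maximal classes and the $(1+o(1))$ vertex-count factors contribute only $o(1)$.
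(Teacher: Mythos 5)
The paper does not prove this theorem: it is stated as a known result cited from Chapter~1 of \citet{AGZ}, with the explicit covariance $\Sigma$ deferred to section 2.1.7 there, so there is no ``paper's own proof'' to compare against. Your sketch is an outline of precisely the combinatorial method-of-moments argument given in AGZ: expand traces into sentences, use centering and independence to kill one-word clusters, prove the weight bound $wt(\mathbf a)\le\frac12\sum_i m_i'$ via the forest/unicyclic dichotomy, show extremal classes reduce to a pairing of words, and read off the Wick formula with $\Sigma$ assembled from second and fourth moments. The skeleton is sound. The step you yourself flag as the principal obstacle --- ruling out extremal clusters with three or more words, especially the pendant-subtree bookkeeping in the unicyclic case --- is exactly where the technical weight of the AGZ argument lies (around Lemma 2.1.34 and the surrounding structural lemmas); as written, your treatment of that case is a heuristic rather than a proof, but the underlying idea (cycle space over $\mathbb Z/2$ is one-dimensional, so parity constraints force exactly two words to carry the cycle) is the right one. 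There is no gap in approach, only in rigor at that classification step.
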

For an explicit expression of $\Sigma$ one might look at section 2.1.7 of \citet{AGZ}.

Finally for a general analytic function $g(x)= \sum_{i=0}^{\infty} g_{i}x^{i}$, one considers the following polynomial $g^{(m)}(x)= \sum_{i=0}^{m} g_{i}x^{i}$ and prove the following result:
\begin{theorem}(\citet{AZ05})\label{thm:cltapprox}
Suppose $W$ be a Wigner matrix satisfying Assumption \ref{ass:wig}. Then for any analytic function $g(x)=\sum_{i=0}^{\infty}g_{i}x^{i}$ on a set containing $[-1-\varepsilon,1+\varepsilon]$ and having polynomial growth we have
\begin{equation}
\Var\left[\Tr[g(W)]  \right]\le \bar{c} \sup_{x\in [-1-\varepsilon,1+\varepsilon]} |g'(x)|^2.
\end{equation}
In particular if the function is analytic on a interval containing $[-1-\varepsilon, 1+\varepsilon]$, for any $\eta$ we can choose large enough $i$ such that 
\begin{equation}
\Var\left[ \Tr[g(W)]- \sum_{j=0}^{i} g_{j}x^{j}  \right]\le \eta.
\end{equation}
\end{theorem}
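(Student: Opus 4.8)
The plan is to reduce the bound to the known covariance structure of traces of powers, diagonalise that structure in the Chebyshev basis, estimate the resulting quadratic form by $\sup|g'|^2$ through an elementary Fourier inequality, and then pass to general analytic $g$ (and to all $n$) by truncating the power series; the enlarged interval $[-1-\varepsilon,1+\varepsilon]$ enters precisely through the geometric decay of the Taylor coefficients. Write $g(x)=\sum_{i\ge 0}g_i x^i$; analyticity on a neighbourhood of $[-1-\varepsilon,1+\varepsilon]$ forces the radius of convergence $\rho_0>1+\varepsilon$, so $|g_i|\le C_\delta\delta^{-i}$ for some $\delta\in(1+\varepsilon,\rho_0)$. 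For a polynomial $g=\sum_{i=0}^m g_i x^i$ the constant term is irrelevant and $\Var[\Tr g(W)]=\sum_{i,j=1}^m g_i g_j\,\Cov(\Tr W^i,\Tr W^j)$. Expand $x^i=\sum_{k\ge 0}d^{(i)}_k T_k(x)$ in Chebyshev polynomials of the first kind on $[-1,1]$, so that $d^{(i)}_k=2^{1-i}\binom{i}{(i-k)/2}$ for $k\ge 1$ of the same parity as $i$ and $0$ otherwise, and put $c_k=\sum_i g_i d^{(i)}_k$, i.e.\ $g(\cos\theta)=c_0+\sum_{k\ge 1}c_k\cos k\theta$.

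Next I would use the explicit form of the limiting covariance $\Sigma$ of Theorem~\ref{thm:polyclt}, recalled from \citet[\S 2.1.7]{AGZ}: for the real symmetric sub-Gaussian ensemble $\Sigma$ is diagonal in the Chebyshev basis,
\[
\Sigma(i,j)=\kappa\sum_{k\ge 1}k\,d^{(i)}_k d^{(j)}_k+E(i,j),
\]
where $\kappa$ is a universal constant and $E$ is the bounded finite-rank correction coming from the diagonal variance and fourth cumulant of the entries, with $|E(i,j)|\le C\sum_{k=1}^{2}|d^{(i)}_k||d^{(j)}_k|$. By Theorem~\ref{thm:polyclt} the finite sum above converges, as $n\to\infty$, to $\sum_{i,j}g_i g_j\Sigma(i,j)=\kappa\sum_{k\ge 1}k c_k^2+\sum_{k=1}^2 e_k c_k^2$ with bounded constants $e_k$. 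Since $c_k^2\le k c_k^2$ and $k\le k^2$ for $k\ge 1$, and
\[
\frac{d}{d\theta}g(\cos\theta)=-\sin\theta\,g'(\cos\theta)=-\sum_{k\ge 1}k c_k\sin k\theta,
\]
orthogonality of $\{\sin k\theta\}_{k\ge 1}$ on $[0,\pi]$ yields $\sum_{k\ge 1}k^2 c_k^2=\tfrac{2}{\pi}\int_0^\pi|g'(\cos\theta)|^2\sin^2\theta\,d\theta\le\sup_{[-1,1]}|g'|^2$. Combining, $\lim_n\Var[\Tr g(W)]\le\bar c\,\sup_{[-1-\varepsilon,1+\varepsilon]}|g'|^2$ for every polynomial $g$, with $\bar c$ depending only on the entry distribution.

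To reach arbitrary analytic $g$ and a bound uniform in $n$, I would use the truncations $g^{(m)}=\sum_{i\le m}g_i x^i$ and the tail $h_m=g-g^{(m)}$. The $L^2(\pr)$ triangle inequality gives $\limsup_n\Var[\Tr g(W)]^{1/2}\le\limsup_n\Var[\Tr g^{(m)}(W)]^{1/2}+\limsup_n\Var[\Tr h_m(W)]^{1/2}$; the first term is $\le\bar c^{1/2}\sup_{[-1-\varepsilon,1+\varepsilon]}|(g^{(m)})'|$ by the polynomial case, and $\sup_{[-1-\varepsilon,1+\varepsilon]}|(g^{(m)})'|\to\sup_{[-1-\varepsilon,1+\varepsilon]}|g'|$ since the derived series converges uniformly on compact subsets of $\{|z|<\rho_0\}$. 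So it remains to show $\sup_n\Var[\Tr h_m(W)]\to 0$ as $m\to\infty$, for which one needs a bound $|\Cov(\Tr W^i,\Tr W^j)|\le A(i,j)+B_n(i,j)$ holding for all $n$, with an $n$-free part $A(i,j)=C\sum_{k\ge 1}k|d^{(i)}_k d^{(j)}_k|$ whose sum against $|g_i||g_j|$ over $i,j>m$ equals $C\sum_{k\ge 1}k\big(\sum_{i>m}|g_i|\,|d^{(i)}_k|\big)^2$ — a quantity that tends to $0$ as $m\to\infty$, uniformly in $n$, because $\sum_i|g_i||d^{(i)}_k|$ decays geometrically in $k$ — and a remainder $B_n(i,j)$ negligible after the summation (here the polynomial growth of $g$ is also used, to discard the exponentially unlikely event $\{\|W\|>1+\varepsilon\}$). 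The ``in particular'' assertion then follows by applying the main bound to $g-g^{(i)}$, whose derivative tends to $0$ uniformly on $[-1-\varepsilon,1+\varepsilon]$.

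The single genuine obstacle is the uniform-in-$n$ covariance bound just invoked. Crude Cauchy--Schwarz fails because $\E(\Tr W^i)^2$ carries the term $(\E\Tr W^i)^2\asymp n^2$, so the cancellation must be extracted from the word/sentence moment expansion $\Cov(\Tr W^i,\Tr W^j)=\sum_{(w_1,w_2)}\big(\E X_{w_1}X_{w_2}-\E X_{w_1}\E X_{w_2}\big)$: the $O(1)$ contributions come from the connected sentence classes in which every edge is traversed exactly twice and the union graph has a single independent cycle, and counting these is what produces the leading term $\kappa\sum_k k\,d^{(i)}_k d^{(j)}_k$ — the identity that makes the first-kind Chebyshev polynomials the natural basis. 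One must then check that the subleading classes (higher-multiplicity edges, or negative excess degree) contribute a remainder whose growth in $i,j$ is still beaten by the geometric decay $|g_i|\le C_\delta\delta^{-i}$ with $\delta>1+\varepsilon$; this is where the enlarged interval is indispensable, and this bookkeeping — essentially the content of \citet{AZ05} and \citet[Ch.~1]{AGZ} — is where the work lies. (Alternatively, one may bypass the higher-order combinatorics by writing $\Tr g(W)=-\tfrac{1}{2\pi i}\oint_\Gamma g'(z)\log\det(zI-W)\,dz$ for a contour $\Gamma$ inside the region of analyticity and invoking finite-$n$ bounds on covariances of $\log\det$; this yields a bound in terms of $\sup_\Gamma|g'|^2$, which can be made arbitrarily close to $\sup_{[-1-\varepsilon,1+\varepsilon]}|g'|^2$.)
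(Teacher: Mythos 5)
The theorem you are proving is not actually proved in this paper: it is imported wholesale from \citet{AZ05}, and the remark immediately following it makes the paper's intended route explicit — \citet{AZ05} derive the variance bound from a Poincar\'e inequality on the matrix entries, and sub-Gaussianity (Assumption~\ref{ass:wig}(iv)) guarantees that such a Poincar\'e inequality holds. That argument is a finite-$n$ concentration bound: if the entries satisfy $\Var[f]\le c\,\E|\nabla f|^2$, one applies this directly to $f=\Tr g(W)$, uses $\partial_{x_{ij}}\Tr g(W)\propto n^{-1/2}\,g'(W)_{ij}$ so that $|\nabla f|^2\propto n^{-1}\Tr[g'(W)^2]$, bounds $\Tr[g'(W)^2]\le n\sup_{[-1-\varepsilon,1+\varepsilon]}|g'|^2$ on the event $\{\|W\|\le 1+\varepsilon\}$, and uses the polynomial growth of $g$ together with exponential concentration of $\|W\|$ to absorb the exceptional event. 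The ``in particular'' statement is then immediate by applying this to $g-g^{(i)}$.

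Your proposal takes a genuinely different route: you expand in powers of $W$, diagonalize the limiting covariance $\Sigma$ in the Chebyshev basis, control the quadratic form by the Parseval identity $\sum_k k^2 c_k^2=\tfrac{2}{\pi}\int_0^\pi|g'(\cos\theta)|^2\sin^2\theta\,d\theta\le\sup|g'|^2$, and then truncate the Taylor series. This is internally coherent and the Fourier inequality is correct, but — as you yourself flag — it hinges on a uniform-in-$n$ covariance bound $|\Cov(\Tr W^i,\Tr W^j)|\le A(i,j)+B_n(i,j)$ with explicit control in $i,j$, and this you leave open. That is a genuine gap, not a cosmetic one: Theorem~\ref{thm:polyclt} only furnishes the limiting covariance, whereas your truncation step requires finite-$n$ control for powers $i$ in the tail of the series, where the word-counting combinatorics become delicate and the factorial/binomial factors must be beaten by the geometric decay $|g_i|\lesssim\delta^{-i}$. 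In effect you have translated the problem into redoing a substantial part of \citet{AZ05}'s combinatorial estimates, rather than avoiding them. The Poincar\'e route buys you exactly what you lack: a bound that is finite-$n$ from the outset and makes no reference to the moment method at all, so no uniform combinatorial estimate is needed and the enlarged interval enters only through the operator-norm concentration event. I would suggest replacing the combinatorial core of your argument with the Poincar\'e inequality step, keeping your (correct) reduction of the ``in particular'' clause to the main bound applied to $g-g^{(i)}$.
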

\begin{remark}
One might note that for Theorem \ref{thm:cltapprox} to hold, \citet{AZ05} require the entries to satisfy Poincare inequality. However one might look at Remark 3.12 of \citet{van2014probability} to see that sub-Gaussianity implies Poincare inequality.
\end{remark}
On the other hand in the recent paper \citet{Ban22} the combinatorial approach for proving edge universality with general non-symmetrically distributed entries was considered. This method relies upon proving the following results
\begin{theorem}(\citet{Ban22})\label{thm:traceconvergence}
Consider the Wigner matrix $W$ satisfying Assumption \ref{ass:wig}. Then for any fixed $t \in (0,\infty)$ taking $k= \left[tn^{\frac{2}{3}}\right]$, we have the following results 
\begin{enumerate}
\item $\E\Tr\left[ W^{2k} \right]=O(1)$ and $\E\Tr\left[W^{2k+1}\right]=o(1)$.
\item If the limit of $\lim_{n \to \infty} \E\Tr\left[ W^{2k} \right] $ for some $t\in (0,\infty)$ exists, then the limit only depends on the first and second moment of entries. 
\item As the limit exists for Gaussian entries, the limit exists and is universal for any Wigner matrix satisfying Assumption \ref{ass:wig}. 
\end{enumerate}
\end{theorem}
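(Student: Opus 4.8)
\emph{Proof strategy.} The plan is to establish all three parts by the moment (word--sentence) method applied to $\E\Tr[W^m]$ for $m=2k$ and $m=2k+1$, in the spirit of \citet{sosh} and with the bookkeeping of \citet{Ban22}. First, from \eqref{eq:traceexpression} and $W=(x_{i,j}/\sqrt n)_{i,j}$ one writes
\[
\E\Tr[W^{m}]=n^{-m/2}\sum_{\mathbf w}\Bigl(n(n-1)\cdots\bigl(n-\#V_{\mathbf w}+1\bigr)\Bigr)\prod_{e\in E_{\mathbf w}}\E\!\left[x^{\,m_e(\mathbf w)}\right],
\]
the sum over equivalence classes $\mathbf w$ of closed words of length $m+1$ on $\{1,\dots,n\}$, where $m_e(\mathbf w)\ge 1$ is the passage count of the edge $e$ (so $\sum_e m_e=m$), the falling factorial counts the labellings of $\mathbf w$, and each $\E[x^{m_e}]$ carries the normalisation $\E[x^2]=\tfrac14$. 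Since the entries are centred and independent, only classes in which every edge is visited at least twice survive, i.e.\ only weak Wigner words; for such a class $\#E_{\mathbf w}\le m/2$, hence $\#V_{\mathbf w}\le m/2+1$, with equality precisely when $\mathbf w$ is a Wigner word --- for $m=2k$, the $C_k$ classes in bijection with Dyck paths of length $2k$.

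For the even case $m=2k$, which is the heart of the matter, I would organise the weak Wigner words by how far each is from a depth--first tree traversal, using the word--to--Dyck--path encoding recalled in subsection~7.2 of \citet{Ban22}: each class is coded by a Dyck path together with a finite ``decoration'' recording the edges of multiplicity $>2$ and the chords producing cycles, and the number $p$ of these decorations (``self--intersections'') organises the sum as $\E\Tr[W^{2k}]=\sum_{p\ge 0}\Theta_p(n,k)$. The point of the scaling $k=[tn^{2/3}]$ is that, once the many classes obtained by collapsing vertices are recombined (so that a skeleton on $v$ vertices is effectively counted with weight $n^{v}$ rather than with the falling factorial, which alone would be super--polynomially small), each $\Theta_p(n,k)$ has a finite limit of order $1$, and the series $\sum_p\Theta_p$ converges; the sub--Gaussian bound $\E[x^{2\ell}]\le(\mathrm{const}\cdot\ell)^{\ell}$ of Assumption~\ref{ass:wig}(iv) is precisely what keeps the high--multiplicity edges under control in this summation. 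Carrying this out gives $\E\Tr[W^{2k}]=O(1)$, the first half of part~(1).

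The odd case is lighter and I would treat it by the same machinery: when $m=2k+1$ the total passage count is odd, so every surviving class has an edge of odd multiplicity $\ge3$; since a closed walk on a tree crosses each edge evenly, $G_{\mathbf w}$ is not a tree and $\#V_{\mathbf w}\le k$, and the corresponding count --- again via subsection~7.2 of \citet{Ban22} and the sub--Gaussian bound --- shows $\E\Tr[W^{2k+1}]=o(1)$, completing part~(1). For parts~(2) and~(3) I would then revisit the even--case expansion and identify which moments of $x$ influence the limit: the contributions in which some $\E[x^{\ell}]$ with $\ell\ge3$ appears carry an extra deficit that sends them to $0$, so $\lim_n\E\Tr[W^{2k}]$, when it exists, is a function of $t$ and of $\E[x^2]=\tfrac14$ alone, hence the same for every matrix satisfying Assumption~\ref{ass:wig} --- in particular the same as for the GOE. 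For the GOE the limit does exist: from the explicit correlation kernel and its edge limit (the Airy kernel, \citet{forrester1993spectrum}, \citet{tracy1996orthogonal}) one gets $\E\Tr[W^{2k}]\to 2\int_{-\infty}^{\infty}e^{-tx}\rho(x)\,dx$, with $\rho$ the limiting mean edge density, an integral that converges for $t>0$. Together with part~(2) this yields existence and universality, which is part~(3).

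The main obstacle is the uniform--in--$n$ estimate in the even case: at the critical exponent $k\asymp n^{2/3}$ the falling--factorial vertex counts are individually super--polynomially small, yet the recombination over coincidence patterns and the growth of the number of decorated Dyck paths restore order--$1$ contributions at every defect level, so the summation over the defect number $p$ must be organised and controlled very carefully and uniformly in $n$. This balancing is exactly the technical core of \citet{sosh} and of the part of \citet{Ban22} on which the present paper rests; the remaining ingredients --- the structural lemmas on weak Wigner words, the reduction of the limit to the second moment, and the classical GOE edge asymptotics --- are comparatively routine.
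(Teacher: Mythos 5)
This theorem is not proved in the present paper; it is stated verbatim as a citation of \citet{Ban22} and used as a black box in the proof of Theorem~\ref{thm:joincltpp}. There is therefore no ``paper's own proof'' to compare your attempt against. Evaluated on its own terms, what you have written is a reasonable high-level outline of the Soshnikov/Banerjee combinatorial route rather than a proof: the expansion of $\E\Tr[W^m]$ over equivalence classes of closed words, the reduction to weak Wigner words, the edge/vertex count $\#V_{\mathbf w}\le m/2+1$ with equality exactly for Wigner words, the observation that in the odd case some edge has odd multiplicity $\ge 3$ forcing a cycle and hence $\#V_{\mathbf w}\le k$, and the identification of universality with the fact that only multiplicity-two edges survive at the correct scale --- all of this is correct and matches the strategy of \citet{Ban22} and \citet{sosh}.

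The substantive gap is precisely the one you flag at the end: at $k\asymp n^{2/3}$ the falling-factorial prefactor is super-polynomially small per class, the number of decorated Dyck-path classes grows rapidly with the defect level $p$, and the sub-Gaussian moment bound must be threaded through the whole summation to make $\sum_p\Theta_p(n,k)$ uniformly convergent. You name this balancing act as ``the technical core of \citet{sosh} and \citet{Ban22}'' and do not attempt it; but it is not an ingredient of the proof that can be deferred, it essentially \emph{is} the proof of parts (1) and (2), and the odd case and the ``extra deficit'' claim in part (2) both rest on the same uncarried-out uniform estimates. Part (3) also needs a genuine GOE computation at the edge scale rather than the heuristic Laplace-transform formula you write (the precise form of the limit, including the sign in the exponent and the normalization induced by the scaling $W=(x_{i,j}/\sqrt n)$ with $\var(x_{i,j})=\tfrac14$, needs to be pinned down); but since you correctly reduce (3) to (2) plus the Gaussian computation, that part is at least logically complete as an outline. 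In short: the architecture is right and matches \citet{Ban22}, but everything quantitative is missing, and the missing part is where the difficulty lives.
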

\begin{theorem}(\citet{Ban22})\label{thm:jointmom}
Consider the Wigner matrix $W$ satisfying Assumption \ref{ass:wig}. Then for any fixed $t_{1}, \ldots, t_{l} \in (0,\infty)^{l} $ taking $k_{i}= \left[ t_{i} n^{\frac{2}{3}} \right]$, we have the following results
\begin{enumerate}
\item 
\begin{equation}\label{eq:jointlim}
\E\left[ \prod_{i=1}^{l} \left[\Tr\left[W^{k_{i}}\right] - \E\left[  \Tr\left[W^{k_{i}}\right]\right] \right] \right]=O(1).
\end{equation}
\item If the limit in \eqref{eq:jointlim} exists for some $t_{1},\ldots , t_{l}$, then the limit only depends on the first and second moment of entries.
\item As the limit exists for Gaussian entries, the limit exists and is universal for any Wigner matrix satisfying Assumption \ref{ass:wig}.
\end{enumerate}
\end{theorem}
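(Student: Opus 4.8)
\medskip
\noindent\textbf{Proof strategy.}
The plan is to run the word--sentence expansion of \citet{AZ05} and \citet{AGZ} for $l$ simultaneous closed walks, and then to invoke, at the level of sentences, the refined walk counting that already underlies the single--trace statement of Theorem~\ref{thm:traceconvergence} in \citet{Ban22}. Put $K=\sum_{i=1}^{l}k_i$ and expand $\Tr[W^{k_i}]=n^{-k_i/2}\sum_{w_i}X_{w_i}$, the sum running over closed $\mathcal S$--words $w_i$ of length $k_i+1$. Multiplying out the centred product and using inclusion--exclusion on the $\E$--subtractions,
\[
\E\!\left[\prod_{i=1}^{l}\Bigl(\Tr[W^{k_i}]-\E\,\Tr[W^{k_i}]\Bigr)\right]
=n^{-K/2}\sum_{a=[w_1,\dots,w_l]}c_a\,\E[X_a],
\]
where $a$ runs over ordered $l$--tuples of closed words of the prescribed lengths, $X_a=\prod_iX_{w_i}$, $\E[X_a]=\prod_{e\in E_a}\E\!\left[x_{1,2}^{\,p_e(a)}\right]$ with $p_e(a)$ the total passage count of the edge $e$ across the whole sentence, and the integer coefficients $c_a$ vanish unless $a$ is \emph{edge--connected}: no partition of $\{1,\dots,l\}$ into two nonempty blocks splits $a$ into sub--sentences with disjoint edge sets. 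Indeed the part of the naive sum supported on edge--split sentences factorises through $\prod_i\E\,\Tr[W^{k_i}]$ and its partial products, which is exactly what the centring removes; this is the step in which the problem is reduced from the full sentence sum to the irreducible part. Since $\E[x_{1,2}]=0$, only sentences with $p_e(a)\ge 2$ for every edge survive, so $\#E_a\le K/2$ and, $G_a$ being connected, $\#V_a\le\#E_a+1\le K/2+1$.

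The core estimate is that this irreducible sum remains $O(1)$ when $K=\Theta(n^{2/3})$. Grouping sentences by the isomorphism type of $G_a$ together with the passage profile, the number of sentences of a type with $v$ vertices is at most $n^{v}$; Assumption~\ref{ass:wig}(iv) bounds $|\E[X_a]|$ by $(1/4)^{\#\{e:\,p_e=2\}}\prod_{e:\,p_e\ge3}(\mathrm{const}\cdot p_e)^{p_e/2}$; and there remains a purely combinatorial count of the ways to thread $l$ closed walks of the prescribed lengths through the graph with the given profile. This is exactly where the walk counting behind Theorem~\ref{thm:traceconvergence} enters: an irreducible sentence of total length $K$ is, up to a bounded number of ``defects'' (edges with $p_e\ge3$, edges shared across two words, odd passage counts, independent cycles), a single closed tree walk of length $K$ traversing each edge exactly twice; for such sentences the vertex factor $n^{\#V_a}=n^{K/2+1}$ against the normalisation $n^{-K/2}$ leaves a single surplus power of $n$, which is cancelled by the remaining factor $C_{K/2}\,(1/4)^{K/2}=C_{K/2}4^{-K/2}\sim\bigl(\sqrt{\pi}\,(K/2)^{3/2}\bigr)^{-1}\asymp n^{-1}$ coming from the $C_{K/2}$ Dyck shapes and the second moments --- and this is exactly where the exponent $\tfrac{2}{3}$ is forced. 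Every further defect either lowers $\#V_a-\tfrac{K}{2}$ below $1$ or replaces a Dyck factor by a more constrained count, costing a negative power of $n^{1/3}$ uniformly in $n$; summing over the finitely many leading shapes and estimating the remainder term by term gives statement~(1), and in particular absorbs any odd $k_i$, whose walks always carry at least one parity defect.

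For statements~(2) and~(3) I would argue by comparison at the level of the sentence sum. Replacing the entry distribution by another one satisfying Assumption~\ref{ass:wig} with the same first two moments changes $n^{-K/2}\sum_a c_a\E[X_a]$ only through sentences in which some edge has $p_e(a)\ge3$, since $\E[x_{1,2}^{p}]$ is fixed for $p\le2$; but any such sentence carries a $p_e\ge3$ defect, hence lies in a sub--sum which the count of the previous paragraph shows to be $o(1)$. Thus the two joint moments differ by $o(1)$, so if the limit in \eqref{eq:jointlim} exists for one matrix satisfying Assumption~\ref{ass:wig} it exists for every such matrix and coincides, which is~(2). It therefore suffices to know the limit exists in the Gaussian (GOE) case, where the sentence sum has only even moments and is explicit and the analysis behind Theorem~\ref{thm:traceconvergence} --- carried out for GOE in \citet{Ban22}, cf.\ also \citet{tracy1996orthogonal} and \citet{forrester1993spectrum} for the rescaled extreme eigenvalues --- extends to general $l$ to give a finite limit. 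Combined with~(2), the limit exists and is universal for every matrix satisfying Assumption~\ref{ass:wig}, which is~(3).

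\medskip
\noindent The hard part is the core estimate of the second paragraph: the uniform--in--$n$ bookkeeping, with walk lengths $\Theta(n^{2/3})$, of how the gluings among the $l$ walks create defects, of how many defects an $O(1)$--level sentence can support, and of how the centring coefficients $c_a$ mesh with this ``defect budget''. All of it is the sentence--level counterpart of the single--trace walk analysis behind Theorem~\ref{thm:traceconvergence}; granted that, the rest is bookkeeping.
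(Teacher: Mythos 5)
The paper does not prove Theorem~\ref{thm:jointmom}; it is stated as an external result cited from \citet{Ban22}, so there is no internal proof to compare against. The nearest thing in the paper is the proof of Theorem~\ref{thm:joincltpp}, which deploys the same machinery from \citet{Ban22} and therefore gives a reliable picture of what the cited argument actually looks like.

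Your sketch is in the right spirit: the word--sentence expansion, the reduction to edge-connected sentences via the centring/inclusion--exclusion, the bounds $\#E_a\le K/2$, $\#V_a\le K/2+1$, and the Stirling computation $C_{K/2}4^{-K/2}\asymp n^{-1}$ which is ``where the exponent $2/3$ is forced'' are all correct observations and match the scalings the paper relies on. Points~(2) and~(3) are also handled correctly in principle: only configurations with some $p_e\ge 3$ feel the higher moments, and these are subdominant, so universality follows from the Gaussian case.

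However, the ``core estimate'' is not actually carried out, and the way you propose to carry it out differs from what \citet{Ban22} does in a way that leaves a genuine gap. You work directly at the sentence level, treating shared edges, $p_e\ge 3$ edges, cycles, and odd passage counts as a generic pool of ``defects'' whose number you implicitly assume is $O(1)$, each costing a negative power of $n^{1/3}$. But a priori the number of gluings and higher-multiplicity edges is unbounded --- the sums over the parameter $N$ (the number of ``type $j\ge 2$ instants'') in the paper's own \eqref{eq:tobound} run over all $N$, controlled only because each contributes a factor roughly $3\cdot n^{-1}$ multiplied by Dyck-type counts that must be summed and shown to converge; this delicate balance is precisely the hard part. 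What \citet{Ban22}, and the paper's proof of Theorem~\ref{thm:joincltpp}, actually do is first apply the explicit merging procedure (Algorithm~\ref{alg:embed}) to collapse each edge-connected sentence to a \emph{single} long closed word, and then analyse that single word via its skeleton word and attached Dyck paths, so that the ``defect'' accounting becomes a concrete sum over $(N;p_1,\dots,p_N;q_1,\dots,q_{N-1};r_1,\dots,r_N)$ with controllable Catalan-type factors. Your sketch names the phenomena but supplies neither the reduction to a single word nor the skeleton-word decomposition that makes the counting rigorous; as you yourself note, ``the hard part'' is exactly the step you leave as bookkeeping. So the proposal is a correct high-level heuristic for a different, unreduced organisation of the sum, and would need the merging-algorithm reduction (or an equivalent device) to become a proof.
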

Theorems \ref{thm:traceconvergence} and \ref{thm:jointmom} is enough to prove the edge universality as the laplace transformation of the $l$ point correlation function of the point process at the edge of the spectrum is determined by the limits in Theorems \ref{thm:jointmom} and \ref{thm:traceconvergence}. As weak convergence of correlation functions of a sequence of point processes determines the weak convergence of the sequence of point processes, we get the desired result of edge universality.

So in order to prove the independence of the linear spectral statistics and the point process at the edge, we prove the following result:
\begin{theorem}\label{thm:joincltpp}
Suppose we fix $m_{1},\ldots, m_{k}$. Then for any $l$, $l'$, $m_{1}',\ldots, m_{l}' \in \{m_{1},\ldots, m_{k} \}$ and $t_{1},\ldots, t_{l'} \in (0,\infty)$, the following holds:
\begin{equation}
\begin{split}
&\E\left[\prod_{i=1}^{l} \left(\Tr\left[ W^{m_{i}'} \right] - \E\left[\Tr\left[ W^{m_{i}'} \right]\right]\right) \prod_{j=1}^{l'}\left(\Tr\left[W^{[t_{j}n^{\frac{2}{3}}]}\right] - \E\left[  \Tr\left[W^{[t_{j}n^{\frac{2}{3}}]}\right]\right] \right)  \right]\\
& \to \lim_{n \to \infty} \E\left[ \prod_{i=1}^{l} \left(\Tr\left[ W^{m_{i}'} \right] - \E\left[\Tr\left[ W^{m_{i}'} \right]\right]\right) \right]\lim_{n \to \infty}\E\left[\prod_{j=1}^{l'}\left(\Tr\left[W^{[t_{j}n^{\frac{2}{3}}]}\right] - \E\left[  \Tr\left[W^{[t_{j}n^{\frac{2}{3}}]}\right]\right] \right)   \right]
\end{split}
\end{equation}
\end{theorem}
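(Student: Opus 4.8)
The plan is to run the word--sentence expansion of \citet{AZ05} and \citet{Ban22} simultaneously on the fixed powers $m_1',\dots,m_l'$ and the growing powers $k_j=[t_jn^{2/3}]$, and to show that in the limit only ``unmixed'' clusters of words survive. Writing each trace as $\Tr[W^{p}]=\sum_{w}X_{w}$ with $w$ ranging over closed words of length $p+1$, the left-hand side of the stated identity becomes a sum over sentences $a=[w_1,\dots,w_l,u_1,\dots,u_{l'}]$ with $l(w_i)=m_i'+1$ and $l(u_j)=k_j+1$. Two standard reductions apply verbatim: since the entries are centered, $\E X_a=0$ unless every edge of $G_a$ is traversed at least twice; and since every factor in the product is centered, a sentence contributes $0$ unless its \emph{word-dependency graph} --- with the $l+l'$ words as vertices and an edge between two words whenever they share an edge of $G_a$ --- has every connected component (``block'') of size at least $2$. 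Grouping the surviving sentences by the partition $\pi$ of $\{1,\dots,l+l'\}$ into blocks, and using that configurations in which two distinct blocks share a vertex of $G_a$ carry a strictly lower power of $n$, one finds that the left-hand side equals $\sum_{\pi}\prod_{B\in\pi}\Gamma_B+o(1)$, where $\Gamma_B$ --- the ``connected contribution'' of a block $B$ --- is the sum over sentences on exactly the words indexed by $B$ whose word-dependency graph is connected.

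Next I would classify the blocks into three types and identify their limits. (i) If $B$ consists only of fixed-power words, the analysis behind Theorem \ref{thm:polyclt} (Section 2.1.7 of \citet{AGZ}) gives $\Gamma_B=o(1)$ unless $|B|=2$, in which case $\Gamma_B\to\Sigma(m_{i_1}',m_{i_2}')$. (ii) If $B$ consists only of growing-power words, then summing $\prod_{B}\Gamma_B$ over all partitions of the growing-power indices into blocks of size at least $2$ reassembles, up to $o(1)$, the full sentence expansion of $\E\big[\prod_{j=1}^{l'}(\Tr[W^{k_j}]-\E\Tr[W^{k_j}])\big]$, which converges by Theorem \ref{thm:jointmom}. (iii) If $B$ is \emph{mixed}, i.e.\ it contains at least one fixed-power word and at least one growing-power word, then the crux of the whole argument is the claim $\Gamma_B=o(1)$. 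Granting (iii), the only partitions $\pi$ whose contribution survives in the limit are those with no straddling block, whose fixed-power part is a perfect matching and whose growing-power part is a partition into blocks of size at least $2$; when $l$ is odd there is no such $\pi$ and both sides of the asserted identity vanish, and when $l$ is even the surviving contributions factor as $\big(\lim\E\prod_i(\cdots)\big)\big(\lim\E\prod_j(\cdots)\big)$ by Theorems \ref{thm:polyclt} and \ref{thm:jointmom}, which is precisely the assertion.

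The step I expect to be the main obstacle is (iii), the negligibility of mixed blocks, and the underlying reason is a separation of scales: in any configuration of the growing-power words that contributes $O(1)$ to their joint moment the underlying graph already has on the order of $\tfrac12\sum_j k_j=O(n^{2/3})$ vertices, hence occupies only an $n^{-1/3}$-fraction of $\{1,\dots,n\}$, so forcing a bounded-length word to share an edge with it costs more powers of $n$ than the freedom in choosing the attachment can recover. Quantitatively, a closed word of fixed power $p$ glued to the growing-power part at $\ell$ distinct attachment sites must, being a closed walk, spend at least $2\ell$ of its $p$ traversals on already-present edges and at least $2\nu$ of them double-covering the $\nu$ new vertices it introduces; hence $\Gamma_B$ carries at least $\ell$ fewer powers of $n$ than the growing-only block obtained by deleting the fixed word, while the number of ways to place the attachment grows by at most $O(n^{(2/3)\ell})$. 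With the structural description of the dominant growing-power word-shapes from subsection 7.2 of \citet{Ban22} --- they are tree-like with $O(1)$ excess --- this can be promoted to $\Gamma_B=O(n^{-\ell/3})=o(1)$, and a block with several fixed-power words only decays faster. The prototype is $\big|\Cov\big(\Tr[W^2],\Tr[W^{[tn^{2/3}]}]\big)\big|=O(n^{-1/3})$, read off from the Dyck-word expansion together with the Catalan asymptotics $4^{-m}C_m\sim(\pi m^3)^{-1/2}$; it exhibits both the mechanism and the expected decay rate. Once (iii) is established the bookkeeping of the previous paragraph is routine, and parts (2)--(3) of Theorem \ref{thm:main} follow by truncating the power series of $g$ and invoking the variance bound of Theorem \ref{thm:cltapprox}.
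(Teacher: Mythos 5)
Your proposal follows essentially the same strategy as the paper: expand all traces as sums over words, observe that centering forces every edge to be covered at least twice and every word to be coupled to some other word, and then show that the dangerous contributions --- those where a fixed-power word shares an edge with the growing-power part --- are $o(1)$ by the separation-of-scales mechanism you describe (the $O(n^{2/3})$-vertex growing cluster is a vanishing fraction of $\{1,\dots,n\}$, so gluing a bounded word onto it costs $n^{-\ell/3}$). This is precisely the content of the paper's ``Step 2,'' which bounds the shared-edge contribution $T_1$ by embedding $(w_1,w_2)$ into a single long word via Algorithm~\ref{alg:embed} and tracking the constraint that the fixed-length piece imposes on the Dyck/skeleton decomposition of \citet{Ban22}.

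The one structural difference worth flagging: you run a full block (cumulant) decomposition of all $l+l'$ words, grouping by the connected components of the word-dependency graph, which then forces you to deal separately with the event that two \emph{edge-disjoint} blocks nonetheless share a vertex. The paper avoids this extra step by using a coarser split --- $T_1$ (the two \emph{sentences} $a_1=[w_i]_{i\le l}$ and $a_2=[w_j']_{j\le l'}$ share an edge) versus $T_2$ ($E_{a_1}\cap E_{a_2}=\emptyset$). On $T_2$ the two factors are honestly independent, so $\E$ factorizes exactly with no asymptotics needed; the product of marginals then equals $T_2 + T_1'$ where $T_1'$ is the shared-edge part, and a single H\"older comparison gives $|T_1'|\le|T_1|\to 0$. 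This yields $\mathrm{LHS}-\mathrm{product\ of\ marginals}=T_1-T_1'\to 0$ without ever having to identify the limits of the individual block contributions $\Gamma_B$ or show that vertex-sharing across blocks is suppressed. Your route buys a more explicit decomposition of the limit (fixed-part as a perfect matching, growing-part as in Theorem~\ref{thm:jointmom}), which is conceptually transparent; the paper's route buys a shorter proof with fewer combinatorial cases. Your heuristic for step (iii) correctly isolates the $n^{-\ell/3}$ decay; to make it rigorous you would need to carry out the same constrained-skeleton-word count that the paper does in display~\eqref{eq:tobound}, tracking the constraint $|p_1+\dots+p_l-r_l|\le m$ (resp.\ $p_l\le m$) that encodes the fixed length of the attached word.
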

Once this is proved, Theorem \ref{thm:main} readily follows with the help of Theorem \ref{thm:cltapprox}.
\section{Proofs of the results}
To begin with we state an algorithm. This algorithm takes two closed words $w_{1}$ and $w_{2}$ of lengths $k_{1}+1$ and $k_{2}+1$ as input such that the words have at least one edge in common and gives a closed word $w_{3}$ of length $k_{1}+k_{2}+1$ as an output which has the same edge set as the union of the edges of $w_{1}$ and $w_{2}$. This is taken from \citet{Ban22}
\begin{algorithm}\label{alg:embed}
We start with two words $w_{1}$ and $w_{2}$ such that $w_{1}$ and $w_{2}$ shares an edge. Let $\{ \alpha, \beta \}$ be the first edge in $w_{2}$ which is repeated in $w_{1}$. We consider the first appearance of $\{\alpha, \beta \}$ in $w_{2}$. Without loss of generality we assume that the  first appearance of the edge $\{ \alpha , \beta \}$ appears in the word $w_{2}$ in the order $(\alpha,\beta)$. We now consider any appearance (for concreteness say the first) of the edge $\{ \alpha, \beta \}$ in the word $w_{1}$. This appearance $\{ \alpha ,\beta \}$ can be traversed in $w_{1}$ in the order $(\alpha,\beta)$ or $(\beta, \alpha)$.
Considering these we have the word $w_{2}$ looks like 
\begin{equation}\label{eq:w1before}
w_{2}= (\alpha_{0},\alpha_{1},\ldots,\alpha_{p_{1}}, \alpha, \beta,\ldots,\alpha_{k_{1}-1},\alpha_{0})
\end{equation} 
and the word $w_{1}$ looks like 
\begin{equation}\label{eq:w21}
w_{1}= \left( \beta_{0}, \beta_{1},\ldots,\beta_{q_{1}},\alpha, \beta, \ldots, \beta_{k_{2}-1},\beta_{0} \right)
\end{equation}
or
\begin{equation}\label{eq:w22} 
 w_{1}=(\beta_{0},\beta_{1},\ldots,\beta_{q_{1}},\beta,\alpha,\ldots,\beta_{k_{2}-1},\beta_{0}).
 \end{equation}
Now we output the word $w_{3}$ as follows:
\begin{enumerate}
\item Suppose $w_{1}$ is of the form \eqref{eq:w21}, then 
\begin{equation}\label{eq:w31}
w_{3}= \left(  \alpha_{0},\alpha_{1},\ldots,\alpha_{p_{1}},\alpha,\beta,\beta_{q_{1}+3},\ldots, \beta_{k_{2}-1}, \beta_{0},\ldots,\beta_{q_{1}},\alpha,\beta,\alpha_{p_{1}+3},\ldots,\alpha_{k_{1}-1},\alpha_{0} \right).
\end{equation}
\item On the other hand when $w_{1}$ is of the form \eqref{eq:w22}, 
\begin{equation}\label{eq:w32}
w_{3}= \left( \alpha_{0},\alpha_{1},\ldots,\alpha_{p_{1}},\alpha,\beta,\beta_{q_{1}},\ldots,\beta_{0},\beta_{k_{2}-1},\ldots,\beta_{q_{1}+3},\alpha,\beta,\alpha_{p_{1}+3},\ldots,\alpha_{k_{1}-1},\alpha_{0}\right). 
\end{equation}
\end{enumerate}
\end{algorithm}
\begin{proof}[Proof of Theorem \ref{thm:joincltpp}]
As this is the most important proof in this paper, we prove it in step by step. \\
\textbf{Step 1 (A basic observation):} We begin with a very basic observation.
\begin{equation}
\begin{split}
&\E\left[\prod_{i=1}^{l} \left(\Tr\left[ W^{m_{i}'} \right] - \E\left[\Tr\left[ W^{m_{i}'} \right]\right]\right) \prod_{j=1}^{l'}\left(\Tr\left[W^{[t_{j}n^{\frac{2}{3}}]}\right] - \E\left[  \Tr\left[W^{[t_{j}n^{\frac{2}{3}}]}\right]\right] \right)  \right]\\
&= \left( \frac{1}{n} \right)^{\sum_{i}\frac{m_{i}'}{2}+\sum_{j}\frac{[t_{j}n^{\frac{2}{3}}]}{2}}\sum_{w_{1},\ldots, w_{l}~|~ l(w_{i})=m_{i}'+1}\sum_{w_{1}',\ldots, w_{l'}'~|~ l(w_{j}')=[t_{j}n^{\frac{2}{3}}]+1}\E\left[\prod_{i=1}^{l}\left(X_{w_{i}}-E[X_{w_{i}}]\right)\prod_{j=1}^{l'}\left( X_{w'_{j}}-\E[X_{w'_{j}}] \right)\right]\\
&=\left( \frac{1}{n} \right)^{\sum_{i}\frac{m_{i}'}{2}+\sum_{j}\frac{[t_{j}n^{\frac{2}{3}}]}{2}}\sum_{a_{1}}\sum_{a_{2}}\E\left[\prod_{i=1}^{l}\left(X_{w_{i}}-E[X_{w_{i}}]\right)\prod_{j=1}^{l'}\left( X_{w'_{j}}-\E[X_{w'_{j}}] \right)\right]\\
&= \underbrace{\left( \frac{1}{n} \right)^{\sum_{i}\frac{m_{i}'}{2}+\sum_{j}\frac{[t_{j}n^{\frac{2}{3}}]}{2}} \left( \sum_{a_{1},a_{2}~|~E_{a_{1}}\cap E_{a_{2}\neq \emptyset}}\E\left[\prod_{i=1}^{l}\left(X_{w_{i}}-E[X_{w_{i}}]\right)\prod_{j=1}^{l'}\left( X_{w'_{j}}-\E[X_{w'_{j}}] \right)\right] \right)}_{T_{1}(say)}+\\
&~~~~~~~~~~~~~~~ \underbrace{\left( \frac{1}{n} \right)^{\sum_{i}\frac{m_{i}'}{2}+\sum_{j}\frac{[t_{j}n^{\frac{2}{3}}]}{2}} \left( \sum_{a_{1},a_{2}~|~E_{a_{1}}\cap E_{a_{2}=\emptyset}}\E\left[\prod_{i=1}^{l}\left(X_{w_{i}}-E[X_{w_{i}}]\right)\prod_{j=1}^{l'}\left( X_{w'_{j}}-\E[X_{w'_{j}}] \right)\right] \right)}_{T_{2}(say)}
\end{split}
\end{equation}
Here $a_{1}=[w_{i}]_{i=1}^{l}$ and $a_{2}=[w_{j}']_{j=1}^{l'}$. We shall analyze $T_{1}$ and $T_{2}$ separately.\\
\textbf{Step 2 (Analysis of $T_{1}$)}
$T_{1}$ contains the cases when $E_{a_{1}} \cap E_{a_{2}} \neq \emptyset$.

\noindent 
 For this without loss of generality we assume $a_{1}$ contains a single word and so does $a_{2}$. The general case can be proved by repeated use of the arguments given next. We assume $w_{1}$ be a word of fixed length $m+1$ while $w_{2}$ is a word of length $[tn^{\frac{2}{3}}]+1$ for some $t \in (0,\infty)$ such that $E_{w_{1}} \cap E_{w_{2}} \neq \emptyset$. We would like to find the following expectation 
\begin{equation}\label{eq:jointshare}
\begin{split}
\left( \frac{1}{n} \right)^{\frac{m+[tn^{\frac{2}{3}}]}{2}} \sum_{w_{1},w_{2}~|~ l(w_{1})= m+1, l(w_{2})= [tn^{\frac{2}{3}}]+1, E_{w_{1}}\cap E_{w_{2}}\ne \emptyset}\left(\E[X_{w_{1}}X_{w_{2}}]-\E[X_{w_{1}}]\E[X_{w_{2}}]\right)
\end{split}
\end{equation}

We now apply Algorithm \ref{alg:embed} to $(w_{1},w_{2})$ to get a word $\mathfrak{w}$. Hence we write the sum in \eqref{eq:jointshare} in the following way:
\begin{equation}
\left( \frac{1}{n} \right)^{\frac{m+[tn^{\frac{2}{3}}]}{2}}\sum_{\mathfrak{w}}\sum_{w_{1},w_{2}~|~ f(w_{1},w_{2})=\mathfrak{w}} \left(\E[X_{w_{1}}X_{w_{2}}]-\E[X_{w_{1}}]\E[X_{w_{2}}]\right).
\end{equation}
In order to have a nontrivial value of $\left(\E[X_{w_{1}}X_{w_{2}}]-\E[X_{w_{1}}]\E[X_{w_{2}}]\right)$, one needs to have each edge in $\mathfrak{w}$ repeated at least twice. We call this set $\mathcal{W}_{\ge 2, m+[tn^{\frac{2}{3}}]}$. By a straight forward application of Holder's inequality we know $\E[|X_{w_{1}}X_{w_{2}}|]\ge \E[|X_{w_{1}}|]\E[|X_{w_{2}}|].$ So it is enough to bound the following quantity:
\begin{equation}\label{jointtobound}
\left( \frac{1}{n} \right)^{\frac{m+[tn^{\frac{2}{3}}]}{2}}\sum_{\mathfrak{w}\in \mathcal{W}_{\ge 2, m+[tn^{\frac{2}{3}}]}}2|f^{-1}(\mathfrak{w})|\E[|X_{\mathfrak{w}}|].
\end{equation}
 We now look closely at the proof of Theorem 6.2 in \citet{Ban22}. Imitating the arguments of the proof of this theorem it can be showed that all the other cases apart from case (i) have negligible contribution. Also following the proofs of Propositions 8.2 and 8.3 in \citet{Ban22}, we can restrict our attention to the case when every edge in $\mathfrak{w}$ is traversed exactly twice and every type $j \ge 2$ instant in the skeleton word is actually a type $2$ instant. Now we analyze this case in details.
 
\noindent
Alike \citet{Ban22} we at first fix the skeleton word. Let $N$ be the number of type $j\ge 2$ instants in the skeleton word. Given a skeleton word, we choose a type $j\ge 2$ instant and find out the edge which is closed in the next instant. We choose the instant which corresponds to the left endpoint of the first traversal of this edge. So given the skeleton word there is at most $N$ choices to choose the endpoints of $w_{1}$. Let this choice be $l$. Let the length of the part of the skeleton word inside these two points be $m_{1}$ and the outside these two points be $m_{2}$. As a consequence, in $w_{2}$ there will be exactly $m_{2}+1$ Dyck paths and in $w_{1}$ there will be exactly $m_{1}$ Dyck paths. So this might appear same as the calculation in Theorem 6.2 in \citet{Ban22}. However as the length of the word $w_{1}$ is fixed, there will be additional constraints. In particular, if the first endpoint of the word  is the instant where this level first appears in the skeleton word, we have $|p_{1}+\ldots + p_{l}-r_{l}|\le m$. Here we chose the $l$ th type $j \ge 2$ instant. On the other hand, it is possible that the skeleton word returns to the level of first chosen point multiple times before reaching the second chosen point. In this case we have $p_{l}\le m$.    As all the type $j\ge 2$ instants in the skeleton word are type $2$, we have the following upper bound to \eqref{jointtobound}:
\begin{equation}\label{eq:tobound}
\begin{split}
&C\left( \frac{1}{2} \right)^{m+[tn^{\frac{2}{3}}]}\sum_{N}\sum_{l=1}^{N}\sum_{p_{1},\ldots,p_{N}}\sum_{q_{1},\ldots,q_{N-1}}\sum_{r_{1},\ldots,r_{N}~:~|p_{1}+\ldots +p_{l}-r_{l}|\le m} n^{\frac{m+[tn^{\frac{2}{3}}]}{2}-N+1}3^{N}\\
&~~~~~~~~~\left(  m+1\right)\frac{m_{1}}{m+1} \binom{m+1}{\frac{m_{1}+m+1}{2}}\frac{m_{2}+1}{[tn^{\frac{2}{3}}]+1}\binom{[tn^{\frac{2}{3}}]+1}{\frac{m_{2}+[tn^{\frac{2}{3}}]+2}{2}} \\
& + C\left( \frac{1}{2} \right)^{m+[tn^{\frac{2}{3}}]}\sum_{N}\sum_{l=1}^{N}\sum_{p_{1},\ldots,p_{N}~|~ p_{l}\le m}\sum_{q_{1},\ldots,q_{N-1}}\sum_{r_{1},\ldots,r_{N}} n^{\frac{m+[tn^{\frac{2}{3}}]}{2}-N+1}3^{N}\\
&~~~~~~~~~\left(  m+1\right)\frac{m_{1}}{m+1} \binom{m+1}{\frac{m_{1}+m+1}{2}}\frac{m_{2}+1}{[tn^{\frac{2}{3}}]+1}\binom{[tn^{\frac{2}{3}}]+1}{\frac{m_{2}+[tn^{\frac{2}{3}}]+2}{2}} +o(1)
\end{split}
\end{equation}
As the sum over all the unconstrained case (i.e. when both $l(w_{1})$ and $l(w_{2})$ is of $O(n^{\frac{2}{3}})$) is of $O(1)$, the sum in \eqref{eq:tobound} goes to $0$.\\
\textbf{Step 3 (Analysis of $T_{2}$)}
Now we consider the case when $E_{a_{1}} \cap E_{a_{2}}= \emptyset$. Observe that in this case  the random variables $\prod_{i=1}^{l}\left(X_{w_{i}}-E[X_{w_{i}}]\right)$ and $\prod_{j=1}^{l'}\left( X_{w'_{j}}-\E[X_{w'_{j}}] \right)$ are independent. As a result, 
\begin{equation}
\begin{split}
&\E\left[\prod_{i=1}^{l}\left(X_{w_{i}}-E[X_{w_{i}}]\right)\prod_{j=1}^{l'}\left( X_{w'_{j}}-\E[X_{w'_{j}}] \right)\right]\\
&= \E\left[ \prod_{i=1}^{l}\left(X_{w_{i}}-E[X_{w_{i}}]\right) \right]\E\left[ \prod_{j=1}^{l'}\left( X_{w'_{j}}-\E[X_{w'_{j}}] \right) \right]
\end{split}
\end{equation}
By analysis of $T_{1}$, 
\begin{equation}
\begin{split}
&\E\left[\prod_{i=1}^{l} \left(\Tr\left[ W^{m_{i}'} \right] - \E\left[\Tr\left[ W^{m_{i}'} \right]\right]\right) \prod_{j=1}^{l'}\left(\Tr\left[W^{[t_{j}n^{\frac{2}{3}}]}\right] - \E\left[  \Tr\left[W^{[t_{j}n^{\frac{2}{3}}]}\right]\right] \right)  \right]\\
&= \left( \frac{1}{n} \right)^{\sum_{i}\frac{m_{i}'}{2}+\sum_{j}\frac{[t_{j}n^{\frac{2}{3}}]}{2}}  \sum_{a_{1},a_{2}~|~E_{a_{1}}\cap E_{a_{2}=\emptyset}} \E\left[ \prod_{i=1}^{l}\left(X_{w_{i}}-E[X_{w_{i}}]\right) \right]\E\left[ \prod_{j=1}^{l'}\left( X_{w'_{j}}-\E[X_{w'_{j}}] \right) \right]  +o(1).
\end{split}
\end{equation}
Now we know that 
\begin{equation}
\begin{split}
&\E\left[ \prod_{i=1}^{l} \left(\Tr\left[ W^{m_{i}'} \right] - \E\left[\Tr\left[ W^{m_{i}'} \right]\right]\right) \right]\E\left[\prod_{j=1}^{l'}\left(\Tr\left[W^{[t_{j}n^{\frac{2}{3}}]}\right] - \E\left[  \Tr\left[W^{[t_{j}n^{\frac{2}{3}}]}\right]\right] \right)   \right]\\
&= \left( \frac{1}{n} \right)^{\sum_{i}\frac{m_{i}'}{2}+\sum_{j}\frac{[t_{j}n^{\frac{2}{3}}]}{2}}  \sum_{a_{1},a_{2}} \E\left[ \prod_{i=1}^{l}\left(X_{w_{i}}-E[X_{w_{i}}]\right) \right]\E\left[ \prod_{j=1}^{l'}\left( X_{w'_{j}}-\E[X_{w'_{j}}] \right) \right]\\
&= T_{2} + T_{1}'
\end{split}
\end{equation}
where 
\begin{equation}
T_{1}'= \sum_{a=[a_{1},a_{2}]\in \mathcal{A}_{\ge 2,s}}\E\left[ \prod_{i=1}^{l} \left(\Tr\left[ W^{m_{i}'} \right] - \E\left[\Tr\left[ W^{m_{i}'} \right]\right]\right)\right] \E\left[\prod_{j=1}^{l'}\left(\Tr\left[W^{[t_{j}n^{\frac{2}{3}}]}\right] - \E\left[  \Tr\left[W^{[t_{j}n^{\frac{2}{3}}]}\right]\right] \right) \right].
\end{equation}
Here $\mathcal{A}_{\ge 2, s}$ denotes all sentences $a=(a_{1},a_{2})$ such that each edge in the graph $G_{a}$ is traversed at least twice and $E_{a_{1}} \cap E_{a_{2}}\neq \emptyset.$ It is easy to see (by Holder's inequality) that $|T_{1}'|\le |T_{1}|$. Hence $|T_{1}'| \to 0.$ As a consequence, 
\begin{equation}
\begin{split}
&\left| \E\left[\prod_{i=1}^{l} \left(\Tr\left[ W^{m_{i}'} \right] - \E\left[\Tr\left[ W^{m_{i}'} \right]\right]\right) \prod_{j=1}^{l'}\left(\Tr\left[W^{[t_{j}n^{\frac{2}{3}}]}\right] - \E\left[  \Tr\left[W^{[t_{j}n^{\frac{2}{3}}]}\right]\right] \right)  \right]- \right.\\
&~~~~~\left. \E\left[ \prod_{i=1}^{l} \left(\Tr\left[ W^{m_{i}'} \right] - \E\left[\Tr\left[ W^{m_{i}'} \right]\right]\right) \right]\E\left[\prod_{j=1}^{l'}\left(\Tr\left[W^{[t_{j}n^{\frac{2}{3}}]}\right] - \E\left[  \Tr\left[W^{[t_{j}n^{\frac{2}{3}}]}\right]\right] \right)   \right]  \right| \to 0.
\end{split}
\end{equation}
This completes the proof.
\end{proof}
\begin{proof}[Proof of Theorem \ref{thm:main}]
We only prove part $2$ as proof of part $1$ is contained in the proof of Theorem \ref{thm:joincltpp} and part $3$ is a straight forward corollary of part $2$.

This is enough to prove that for an analytic function $g$ of polynomial growth on a set containing $[-1-\varepsilon, 1+\varepsilon]$ and any bounded continuous function $q: \mathbb{R} \to \mathbb{R}$ we have 
\begin{equation}
\begin{split}
&\lim_{n \to \infty} \E\left[ q\left( \Tr[g(W)]- \E[\Tr[g(W)]] \right) \prod_{j=1}^{l'}\left(\Tr\left[W^{[t_{j}n^{\frac{2}{3}}]}\right] - \E\left[  \Tr\left[W^{[t_{j}n^{\frac{2}{3}}]}\right]\right] \right)\right]\\
&=\lim_{n \to \infty}\E\left[q\left( \Tr[g(W)]- \E[\Tr[g(W)]] \right)\right]\lim_{n \to \infty}\E\left[ \prod_{j=1}^{l'}\left(\Tr\left[W^{[t_{j}n^{\frac{2}{3}}]}\right] - \E\left[  \Tr\left[W^{[t_{j}n^{\frac{2}{3}}]}\right]\right] \right) \right]
\end{split}
\end{equation}
%We know from Theorems \ref{thm:polyclt} and \ref{thm:cltapprox} that $\left( \Tr[f(W)] -\E[\Tr[f(W)]]\right)$ and $\left( \Tr[f^{(i)}] -\E[\Tr[f^{(i)}(W)]]\right)$ converge to  Gaussian random variables $Z$ and $Z_{i}$ for any $i$. Further if we look at the joint distribution of  $\left( \Tr[f(W)] -\E[\Tr[f(W)]]\right)$ and $\left( \Tr[f^{(i)}] -\E[\Tr[f^{(i)}(W)]]\right)$, we know that for any  $\E[(Z)]$

We shall fix any $\eta >0$ , choose $i$ large enough and compare $q\left( \Tr[g(W)]- \E[\Tr[g(W)]] \right)$ and $q\left( \Tr[g^{(i)}(W)]- \E[\Tr[g^{(i)}(W)]] \right)$. Here $g^{(i)}(x)=\sum_{j=0}^{i} g_{j}x^{j}$. Since both \\ $\left( \Tr[g(W)]- \E[\Tr[g(W)]] \right)$ is a Gaussian random variable in the limit and\\ $\E\left( \Tr[g^{(i)}(W)]- \E[\Tr[g^{(i)}(W)]] \right)^2$ is uniformly bounded over $n$ and $i$, they are tight uniformly over $n$ and $i$. In particular we can choose $M$ large enough so that both\\ 
$P\left[ \left| \Tr[g(W)]- \E[\Tr[g(W)]] \right| > M \right]$ and $P\left[ \left| \Tr[g^{(i)}(W)]- \E[\Tr[g^{(i)}(W)]] \right|> M \right]$ are less than $\eta$ uniformly over $n$ and $i$. Now inside $[-M,M]$ $q$ is uniformly continuous. So given any $0< \eta'<\eta $ there exists $\delta>0$ such that $|x-y|<\delta$ implies $|q(x)-q(y)|<\eta'$. Now we choose $i$ large enough so that $\Var\left[ \Tr[g(W)]- \Tr[g^{(i)}(W)] \right]\le \delta^{2}\eta$. As a consequence, by Chebyshev's inequality $$P\left[ \left| q\left( \Tr[g(W)]- \E[\Tr[g(W)]] \right) - q\left( \Tr[g^{(i)}(W)]- \E[\Tr[g^{(i)}(W)]] \right) \right|> \eta' \right]< \eta + 2\eta=3\eta.$$ Hence, 
$$ \E\left[ \left( q\left( \Tr[g(W)]- \E[\Tr[g(W)]] \right) - q\left( \Tr[g^{(i)}(W)]- \E[\Tr[g^{(i)}(W)]] \right)\right)^2 \right] \le 3\eta || q||_{\infty}^2 + \eta^2 $$. Now 
\begin{equation}\label{eq:bignonsense}
\begin{split}
& \limsup_{n}\left| \E\left[ q\left( \Tr[g(W)]- \E[\Tr[g(W)]] \right) \prod_{j=1}^{l'}\left(\Tr\left[W^{[t_{j}n^{\frac{2}{3}}]}\right] - \E\left[  \Tr\left[W^{[t_{j}n^{\frac{2}{3}}]}\right]\right] \right)\right]-  \right.\\
&~~~~~ \left.\E\left[q\left( \Tr[g(W)]- \E[\Tr[g(W)]] \right)\right]\E\left[ \prod_{j=1}^{l'}\left(\Tr\left[W^{[t_{j}n^{\frac{2}{3}}]}\right] - \E\left[  \Tr\left[W^{[t_{j}n^{\frac{2}{3}}]}\right]\right] \right) \right]\right|\\
& =\limsup_{n} \left| \E\left[ q\left( \Tr[g(W)]- \E[\Tr[g(W)]] \right) \prod_{j=1}^{l'}\left(\Tr\left[W^{[t_{j}n^{\frac{2}{3}}]}\right] - \E\left[  \Tr\left[W^{[t_{j}n^{\frac{2}{3}}]}\right]\right] \right)\right]- \right.\\
& ~~~~~\E\left[ q\left( \Tr[g^{(i)}(W)]- \E[\Tr[g^{(i)}(W)]] \right) \prod_{j=1}^{l'}\left(\Tr\left[W^{[t_{j}n^{\frac{2}{3}}]}\right] - \E\left[  \Tr\left[W^{[t_{j}n^{\frac{2}{3}}]}\right]\right] \right)\right] +\\
&~~~~~\E\left[ q\left( \Tr[g^{(i)}(W)]- \E[\Tr[g^{(i)}(W)]] \right) \prod_{j=1}^{l'}\left(\Tr\left[W^{[t_{j}n^{\frac{2}{3}}]}\right] - \E\left[  \Tr\left[W^{[t_{j}n^{\frac{2}{3}}]}\right]\right] \right)\right]-\\
& ~~~~~ \left. \E\left[q\left( \Tr[g(W)]- \E[\Tr[g(W)]] \right)\right]\E\left[ \prod_{j=1}^{l'}\left(\Tr\left[W^{[t_{j}n^{\frac{2}{3}}]}\right] - \E\left[  \Tr\left[W^{[t_{j}n^{\frac{2}{3}}]}\right]\right] \right) \right] \right|\\
& \le \limsup_{n} \left|\E\left[ q\left( \Tr[g(W)]- \E[\Tr[g(W)]] \right)- q\left( \Tr[g^{(i)}(W)]- \E[\Tr[g^{(i)}(W)]] \right) \right]\times \right.\\
&~~~~\left. \prod_{j=1}^{l'}\left(\Tr\left[W^{[t_{j}n^{\frac{2}{3}}]}\right] - \E\left[  \Tr\left[W^{[t_{j}n^{\frac{2}{3}}]}\right]\right] \right) \right|+ \left|\E\left[ \prod_{j=1}^{l'}\left(\Tr\left[W^{[t_{j}n^{\frac{2}{3}}]}\right] - \E\left[  \Tr\left[W^{[t_{j}n^{\frac{2}{3}}]}\right]\right] \right) \right]\times \right. \\
&~~~~~ \left.\E\left[ q\left( \Tr[g(W)]- \E[\Tr[g(W)]] \right)- q\left( \Tr[g^{(i)}(W)]- \E[\Tr[g^{(i)}(W)]] \right) \right]\right| +o(1)\\
& \le 2\sqrt{3\eta || q||_{\infty}^2 + \eta^2}\limsup\left[\E\left(  \prod_{j=1}^{l'}\left(\Tr\left[W^{[t_{j}n^{\frac{2}{3}}]}\right] - \E\left[  \Tr\left[W^{[t_{j}n^{\frac{2}{3}}]}\right]\right] \right)\right)^{2} \right]^{\frac{1}{2}}+o(1)
\end{split}
\end{equation}
Here the last line follows from Cauchy-Schwarz inequality. As $\eta>0$ was arbitrary, this proves that the first expression of \eqref{eq:bignonsense} goes to $0$. This proves the result.
\end{proof}

\noindent
\textbf{Acknowledgment:} This research is supported by an Inspire faculty fellowship. I am highly grateful to Prof. Krishna Maddaly for his interest and encouragement.
 \bibliography{PAR_SPI}
 \end{document}